\newcommand{\NN}{\mathcal N}
\newcommand{\be}{\begin{equation}}
\newcommand{\ee}{\end{equation}}
\newtheorem{lemma}{Lemma}
\newtheorem{definition}{Definition}
\newtheorem{theorem}{Theorem}
\newtheorem{proposition}{Proposition}
\newtheorem{remark}{Remark}
\begin{document}

\baselineskip 16pt

\title{On star-forest ascending subgraph decomposition}

\date{}

\author{J.M. Aroca, A. Llad\'o}
\address{Univ. Polit\`ecnica de Catalunya}
\email{aina.llado@upc.edu}

\maketitle

\begin{abstract} 
The    Ascending Subgraph Decomposition (ASD) Conjecture
asserts that every graph $G$ with ${n+1\choose 2}$ edges admits an
edge decomposition $G=H_1\oplus\cdots \oplus H_n$ such that $H_i$
has $i$ edges and it is isomorphic to a subgraph of $H_{i+1}$,
$i=1,\ldots ,n-1$. We show that every bipartite graph $G$ with
${n+1\choose 2}$ edges such that the degree sequence $d_1,\ldots
,d_k$ of one of the stable sets satisfies $ d_{k-i}\ge n-i\; \text{for each}\; 0\le i\le k-1,$, admits an ascending subgraph decomposition with star forests.
We also give a necessary condition on the degree sequence which is
not far from the above sufficient one.
\end{abstract}

\section{Introduction}

A graph $G$ with ${n+1\choose 2}$ edges has an Ascending Subgraph
Decomposition (ASD) if it admits an edge--decomposition
$G=G_1\oplus\cdots \oplus G_n$ such that   $G_i$ has $i$ edges and it 
is isomorphic to a subgraph of $G_{i+1}$, $1\le i<n$. Throughout this paper we use the
symbol $\oplus$ to denote edge--disjoint union of graphs. It was
conjectured by Alavi, Boals, Chartrand, Erd\H{o}s and  Oellerman
\cite{alavi} that every graph  of size ${n+1\choose 2}$ admits
an ASD. The conjecture has been proved for a number of particular
cases, including forests \cite{faudree}, regular graphs
\cite{fuhu3}, complete multipartite graphs \cite{fuhu2} or graphs
with maximum degree $\Delta \le n/2$ \cite{fu}.

In the same paper Alavi et al. \cite{alavi} conjectured that every
star forest of size ${n+1\choose 2}$ in which each connected
component has size at least $n$ admits an ASD in which every graph
in the decomposition is a star. This conjecture was proved by Ma,
Zhou and Zhou \cite{ma}, and the condition was later on weakened to
the effect that the second smaller component of the star forest has
size at least $n$ by Chen, Fu , Wang and Zhou \cite{chen}.

The class of bipartite graphs which admits a star--forest--ASD is clearly larger than the one which admit star --ASD, see Figure \ref{fig1} for a simple example. This motivates the  study of star--forest--ASD for bipartite graphs in terms of the degree sequence of one of the stable sets, which is the purpose of this paper.

\begin{figure}[h]
\begin{center}
\begin{tikzpicture}[scale=0.6]
\foreach \i in {1,2,3,4}
{
\path (0,\i-1) coordinate (P\i);
\path (2,\i-0.5) coordinate (Q\i);
}
\foreach \i in {1,2,3}
{
\draw[fill] (P\i) circle (2pt);
\draw[fill] (Q\i) circle (2pt);
}
\draw[fill] (P4) circle (2pt);
\draw[gray] (P1)--(Q1) ;
\draw[gray] (P2)--(Q2) (P3)--(Q3);
\draw[gray] (P4)--(Q3) (P3)--(Q2) (P3)--(Q1);
\node at (2.8,1.5) {$=$};
\end{tikzpicture}
\hspace{5mm}
\begin{tikzpicture}[scale=0.6]
\foreach \i in {1,2,3,4}
{
\path (0,\i-1) coordinate (P\i);
\path (2,\i-0.5) coordinate (Q\i);
}
\foreach \i in {1,2,3}
{
\draw[fill] (P\i) circle (2pt);
\draw[fill] (Q\i) circle (2pt);
}
\draw[fill] (P4) circle (2pt);
\draw[thick] (P1)--(Q1) ;
\draw[lightgray] (P2)--(Q2) (P3)--(Q3);
\draw[lightgray] (P4)--(Q3) (P3)--(Q2) (P3)--(Q1);
\node at (2.8,1.5) {$\oplus$};
\end{tikzpicture}
\begin{tikzpicture}[scale=0.6]
\foreach \i in {1,2,3,4}
{
\path (0,\i-1) coordinate (P\i);
\path (2,\i-0.5) coordinate (Q\i);
}
\foreach \i in {1,2,3}
{
\draw[fill] (P\i) circle (2pt);
\draw[fill] (Q\i) circle (2pt);
}
\draw[fill] (P4) circle (2pt);
\draw[lightgray] (P1)--(Q1) ;
\draw[thick] (P2)--(Q2) (P3)--(Q3);
\draw[lightgray] (P4)--(Q3) (P3)--(Q2) (P3)--(Q1);
\node at (2.8,1.5) {$\oplus$};
\end{tikzpicture}
\begin{tikzpicture}[scale=0.6]
\foreach \i in {1,2,3,4}
{
\path (0,\i-1) coordinate (P\i);
\path (2,\i-0.5) coordinate (Q\i);
}
\foreach \i in {1,2,3}
{
\draw[fill] (P\i) circle (2pt);
\draw[fill] (Q\i) circle (2pt);
}
\draw[fill] (P4) circle (2pt);
\draw[lightgray] (P1)--(Q1) ;
\draw[lightgray] (P2)--(Q2) (P3)--(Q3);
\draw[thick] (P4)--(Q3) (P3)--(Q2) (P3)--(Q1);
\end{tikzpicture}
\end{center}
\caption{Bipartite graph with Star--Forest--ASD but with no Star--ASD.}
\label{fig1}
\end{figure}
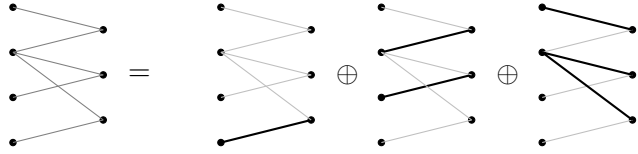

Faudree, Gy\'arf\'as and Schelp \cite{faudree} proved that every forest of stars admits a star--forest--ASD. These authors mention, in the same paper,  that ``Surprisingly [this result], is the most difficult to prove. This could indicate that the conjecture (if true) is a difficult one to prove".  In the same paper the authors propose the following question: Let $G$ be a graph with ${n+1\choose 2}$ edges. Does $G$ have an ASD such that each member is a star forest? In this paper we address this question for bipartite graphs when the centres of the stars in the star--forest--ASD belong to the same stable set. Our main result is the following one.

\begin{theorem}\label{thm:starforest}
Let $G$ be a bipartite graph with
${n+1\choose 2}$ edges. Let $ d_1\le d_2\le\cdots \le d_k$ be the degree sequence of one of the stable sets of $G$.
 If
 $$
 d_{k-i}\ge n-i\; \text{for each}\; 0\le i\le k-1,
 $$
 then, there is a star--forest--ASD of $G$.
\end{theorem}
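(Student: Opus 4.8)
The plan is to fix the stable set $A$ carrying the hypothesis, with $|A|=k$ and degrees $d_1\le\cdots\le d_k$ (all positive, isolated vertices being discarded), write $B$ for the other stable set, and translate the whole problem into a one--sided edge colouring. The bound $d_{k-i}\ge n-i$ together with $\sum_j d_j=\binom{n+1}{2}$ forces $k\le n$ (otherwise the top $n$ degrees alone would already account for all $\binom{n+1}{2}$ edges), so every vertex of $B$ has degree at most $k\le n$; and since $d_k\ge n$ the highest--degree vertex of $A$ already forces $|B|\ge n$. The key observation is that, with all centres in $A$, a star forest is exactly a subgraph in which every vertex of $B$ has degree at most $1$. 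Hence a star--forest--ASD with centres in $A$ is the same as a colouring of $E(G)$ with colours $1,\dots,n$ in which colour $i$ is used exactly $i$ times, the edges at each vertex of $B$ receive distinct colours, and the sorted multiset of star sizes of colour $i$ — the partition $\lambda^{(i)}$ recording the $A$--degrees of the colour--$i$ subgraph — is contained, as a Young diagram, in $\lambda^{(i+1)}$. Thus it suffices to produce nonnegative integers $M_{j,i}$ (the size of the star centred at $a_j$ in colour $i$) with $\sum_i M_{j,i}=d_j$ and $\sum_j M_{j,i}=i$, whose columns form a nested chain $\lambda^{(1)}\subseteq\cdots\subseteq\lambda^{(n)}$, and then to realise this matrix inside $G$.

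First I would build the matrix $M$ combinatorially, ignoring $G$. Read backwards, the chain $\lambda^{(1)}\subseteq\cdots\subseteq\lambda^{(n)}$ is a standard-tableau-type object: a shape $\mu\vdash n$ with exactly $k$ rows together with an order in which its $n$ boxes are added one at a time so that each intermediate shape is a partition, the prescribed row sums being $d_1,\dots,d_k$. I would construct it greedily from colour $n$ downwards, at each step deleting a removable corner of the current shape and choosing which row to shorten so that the running row sums descend toward the targets $d_j$. The hypothesis $d_{k-i}\ge n-i$ is precisely the amount of mass needed to keep the large colours feasible: it guarantees, at every stage, enough long rows (high--degree vertices) to carry a star forest of the required size while the shape stays a partition.

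It then remains to realise $M$ as an honest colouring of $G$. Here I would colour the classes from the largest downward: given the classes already coloured, colour $i$ must be a subgraph of the remaining graph with prescribed $A$--degrees $M_{\cdot,i}$ and all $B$--degrees at most $1$, that is, a bipartite $b$--matching saturating the demands $M_{j,i}$ at $A$. Its existence is a Hall/deficiency condition: for every $S\subseteq A$ one needs $\sum_{j\in S}M_{j,i}\le |N(S)|$ in the current graph. The degree hypothesis is tailored to make this hold — the presence of a vertex of degree at least $n$ keeps $|N(S)|$ large for every set containing a high--degree vertex, while the sizes $M_{j,i}\le i\le n\le|B|$ leave room for the $i$ distinct leaves that colour $i$ needs.

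The main obstacle is the coupling of the two halves: the matrix must be simultaneously \emph{ascending} (forcing the column shapes to nest) and \emph{realisable} (forcing Hall's condition to survive as classes are peeled off). This is also where star forests, rather than single stars, become indispensable. Insisting on single stars would mean peeling a single star of size $n$ off the top vertex, which both breaks the embedding chain (a star forest embeds in a single star only when it is itself a single star) and can violate the degree hypothesis for the residual graph precisely when $d_k$ exceeds $n$ by a moderate amount; spreading colour $n$ into a star forest repairs both at once. I expect the delicate point to be choosing, at each colour, a removable piece whose shape still dominates the piece the construction will produce below it while leaving a graph that continues to satisfy $d_{k-i}\ge n-i$ — equivalently, showing that the greedy tableau construction and the Hall condition can be kept in step all the way down to colour $1$.
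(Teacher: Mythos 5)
Your reduction of the theorem to (i) the existence of an ascending matrix $M$ with row sums $d_1,\ldots,d_k$ and column sums $1,2,\ldots,n$, and (ii) a realization of $M$ as edge--disjoint star forests inside $G$, is exactly the framework of the paper, and part (i) is indeed available (it is a reformulation of the Faudree--Gy\'arf\'as--Schelp theorem on star forests, the paper's Lemma 2; your greedy corner--deletion would still need a proof, but that step is not the problem). The genuine gap is part (ii). You propose to peel colour classes off $G$ from $n$ downwards, obtaining each class as a degree--constrained subgraph whose existence is guaranteed by a Hall/deficiency condition in the residual graph, and you then write that ``the delicate point'' is showing this condition survives all the way down to colour $1$. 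That delicate point is the entire difficulty, and nothing in your sketch addresses it: the residual graph after removing several colour classes no longer satisfies the hypothesis $d_{k-i}\ge n-i$; the Hall condition for the demands $M_{j,i}$ depends on the actual neighbourhood structure of $G$, not only on its degree sequence; and greedy choices made for the large colours can destroy feasibility for the small ones. Moreover, not every ascending matrix can be realized by such a peeling, so the matrix must be built with extra structure that your construction does not supply --- the coupling you correctly identify as ``the main obstacle'' is left unresolved.

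The paper closes this gap with two ingredients that have no counterpart in your plan, and it never peels classes off $G$ at all. First, it does not work with an arbitrary ascending matrix: its Lemma 3 produces one whose support contains every position $(i,j)$ with $i+j\ge k+1$, and this extra property is what allows an explicit \emph{sequential colouring} of the bipartite multigraph $H$ whose adjacency matrix is $M$, built from $k$ explicit matchings plus K\"onig's theorem (Lemma 4). Second, the passage from this auxiliary object to a decomposition of $G$ is done globally rather than colour by colour: Proposition 1 shows that a sequential colouring of $H$ is equivalent to a star--forest--ASD of the \emph{reduced} graph with degree sequence $d$, and the Extension Lemma then transfers the decomposition to every bipartite graph with that $X$--degree sequence by invoking H\"aggkvist's theorem on list edge--colourings of bipartite multigraphs, colouring the edges at $a_i$ from the list $N_G(a_i)$. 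That list--colouring theorem is precisely the tool that replaces your unproven claim that Hall's condition can be kept alive throughout the peeling; without it, or some equivalent simultaneous--realization argument, your proposal remains a plan rather than a proof.
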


The proof of Theorem \ref{thm:starforest} is made in two steps. First we prove   the result for a class of bipartite graphs which we call reduced graphs. For this we use a representation of a star-forest decomposition by the so--called ascending matrices and certain multigraphs and reduce the problem to the existence of a particular edge--coloring of these multigraphs.  The terminology and the proof for reduced graphs is contained in Section \ref{sec2}. In Section  \ref{sec3} we present an extension lemma, which uses a result of H\"aggkvist \cite{hagg} on list edge--colorings, which allows one to extend the decomposition from reduced  to all bipartite graphs with the same degree sequence on one of the stable sets,  completing the proof of Theorem \ref{thm:starforest}. The final section contains   some concluding remarks.

The sufficient condition on the degrees given in Theorem \ref{thm:starforest} is not far from being necessary.

\begin{lemma}\label{lem:starforest} If every bipartite graph $G$ with degree sequence  $ d_1\le d_2\le
\cdots \le d_k$  of  one stable sets $X$  of $G$  admits a star--forest decomposition (ascending or not) with the centers of the stars in $X$ then
$$
\sum_{i=0}^t d_{k-i}\ge \sum_{i=0}^t (n-i),\; t=0,\ldots ,k-1.
$$
\end{lemma}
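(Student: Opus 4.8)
The plan is to prove the stated implication directly: assuming every realization of the degree sequence is decomposable, I apply this hypothesis to a carefully chosen family of \emph{extremal} realizations, read off a dual family of inequalities, and then dualise back to the stated form. Throughout I use that $\sum_i d_i=\binom{n+1}{2}$ is the size of $G$, and the elementary fact that a star forest with all centres in $X$ is exactly a subgraph of $G$ in which every vertex of the opposite stable set $Y$ has degree at most $1$ (each such component is a star centred at its unique $X$-vertex).

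The engine is a counting bound. Suppose a single bipartite graph $G$ (parts $X,Y$) decomposes as $G=F_1\oplus\cdots\oplus F_n$ into star forests with $|F_j|=j$ and all centres in $X$. Fix any $S\seq Y$. The edges of $G$ incident with $S$ are partitioned among the $F_j$, and $F_j$ contributes at most $\min(j,|S|)$ of them: at most $|S|$ because each vertex of $S$ has degree $\le 1$ in $F_j$, and at most $j=|F_j|$ in total. Summing over $j$ gives, for every $S\seq Y$ with $|S|=s\le n$,
\[ \sum_{y\in S}\deg_G(y)\le\sum_{j=1}^{n}\min(j,s)=sn-\binom{s}{2}=\sum_{i=0}^{s-1}(n-i). \]
Next I would construct, for each $s=1,\dots,n$, a realization $G_s$ of the given $X$-degree sequence that concentrates as much $Y$-degree as possible on an $s$-set. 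Take $S=\{y_1,\dots,y_s\}$, join each $x_i$ to $\min(d_i,s)$ of these vertices (possible since $\min(d_i,s)\le s=|S|$), and route its remaining $(d_i-s)^+$ edges to private pendant vertices outside $S$. This is a simple bipartite graph with the prescribed degrees on $X$ and with $\sum_{y\in S}\deg_{G_s}(y)=\sum_{i=1}^{k}\min(d_i,s)$. By hypothesis $G_s$ is decomposable, so the bound above applied to this $S$ yields
\[ \sum_{i=1}^{k}\min(d_i,s)\le\sum_{i=0}^{s-1}(n-i),\qquad s=1,\dots,n. \]

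It remains to recognise these inequalities as equivalent to the stated ones. Let $\lambda$ be the partition $d_k\ge\cdots\ge d_1$ of $\binom{n+1}{2}$ and let $\delta=(n,n-1,\dots,1)$, which is self-conjugate. Since $\sum_i\min(d_i,s)=\sum_{j\le s}\lambda^*_j$ and $\sum_{i=0}^{s-1}(n-i)=\sum_{j\le s}\delta_j$ (and the comparison is automatic for $s>n$), the previous display says exactly that $\lambda^*\preceq\delta$ in the dominance order. Because conjugation reverses dominance and $\delta^*=\delta$, this is equivalent to $\delta\preceq\lambda$, i.e.\ $\sum_{i=0}^{t}d_{k-i}\ge\sum_{i=0}^{t}(n-i)$ for $t=0,\dots,k-1$, which is the claim. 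I expect this dominance/conjugation step to be the one genuinely delicate point: the construction is routine once one notes the decomposition need not be ascending, and the counting bound is immediate, but the whole force of the lemma lies in identifying the derived family $\sum_i\min(d_i,s)\le\sum_{i=0}^{s-1}(n-i)$ as the conjugate of the stated family. One either quotes that conjugation is an anti-automorphism of the dominance lattice, or reproves the single direction needed elementarily, using $\sum_i(d_i-s)^+=\binom{n+1}{2}-\sum_i\min(d_i,s)$ and a careful count of how many degrees exceed the threshold $n-t$ (this is precisely where a naive term-by-term choice $s=n-t$ fails, since the number of parts above the threshold need not equal $t+1$).
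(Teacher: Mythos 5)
Your proof is correct, but it runs in the conjugate direction to the paper's, and so needs one extra piece of machinery the paper avoids. The paper proves the statement (in its reformulation as Lemma~\ref{lem-nec}) by applying the decomposability hypothesis to a single realization, the reduced graph $G_R$, and counting from the forest side: for each $t$, the union $F_n\oplus F_{n-1}\oplus\cdots\oplus F_{n-t+1}$ of the $t$ largest forests has $\sum_{i=0}^{t-1}(n-i)$ edges, while each $y\in Y$ has degree at most $\min\bigl(t,\deg_{G_R}(y)\bigr)$ in that union; because $G_R$ is reduced, $\sum_{y}\min\bigl(t,\deg_{G_R}(y)\bigr)$ telescopes to exactly $\sum_{i=0}^{t-1}d_{k-i}$, so the stated inequalities drop out directly. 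You instead fix a set $S\subseteq Y$ of size $s$ and bound the contribution of each forest $F_j$ to $S$ by $\min(j,s)$ --- the transpose of the paper's count --- which produces the conjugate family $\sum_i\min(d_i,s)\le\sum_{i=0}^{s-1}(n-i)$, and you then convert back using that conjugation reverses the dominance order and that the staircase $(n,n-1,\ldots,1)$ is self-conjugate. That step is valid (and you rightly flag it as the delicate point: the naive substitution $s=n-t$ does fail), but it is precisely what the paper's choice of counting direction sidesteps: counting the top forests against the $Y$-degree capacities of the reduced graph bakes the conjugation into the geometry of $G_R$. Two smaller remarks: your family of graphs $G_s$ is unnecessary, since the reduced graph itself satisfies $\sum_{y\in S}\deg(y)=\sum_i\min(d_i,s)$ when $S$ consists of its $s$ highest-degree $Y$-vertices, so one extremal realization serves for every $s$; and your route has the virtue of making the Gale--Ryser-type duality between the degree sequence and the staircase explicit, at the cost of importing (or elementarily reproving) the dominance--conjugation anti-automorphism, whereas the paper's argument is fully self-contained.
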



\section{Star-forest--ASD for reduced bipartite graphs}
\label{sec2}

Thorughout the section  $G=G(X,Y)$ denotes  a bipartite graph with color classes  $X=\{x_1,\ldots ,x_k\}$ and $Y=\{y_1,\ldots ,y_n\}$. We
denote by $d_X=(d_1\le \cdots \le d_k)$, $d_i=d(x_i)$,  the degree sequence of the
vertices in the stable set $X$ of $G$. We call $d_X$ the $X$--{\it degree sequence} of $G$. We focus on star forest ASD with the stars of the decomposition centered at the vertices in $X$.

We first introduce some definitions.

\begin{definition}[Reduced graph] The reduced graph $G_R=G_R(X,Y')$ of $G(X,Y)$  has color classes $X$ and $Y'=\{y'_1,\ldots ,y'_{d_k}\}$ and $x_i$ is adjacent to the vertices $y'_1,\ldots ,y'_{d_i}$, $i=1,\ldots ,k$. 

We say that $G$ is reduced if $G=G_R$.
\end{definition}

Figure \ref{fig-red} illustartes the above definition.

\begin{figure}[h]
\begin{tikzpicture}[scale=0.6]
\foreach \i in {1,...,4}
{
\path (0,\i+1)  coordinate  (P\i);
\draw[fill] (P\i)  circle (2pt);
\node[left] at (P\i) {$x_{\i}$};
}
\foreach \i in {1,...,6}
{
\path (2,\i)  coordinate  (Q\i);
\draw[fill] (Q\i)  circle (2pt);
\node[right] at (Q\i) {$y_{\i}$};
}
\draw (P4)--(Q6) (P4)--(Q5) (P4)--(Q4) (P4)--(Q3) (P4)--(Q2);
\draw (P3)--(Q5) (P3)--(Q3);
\draw (P2)--(Q3) (P2)--(Q1);
\draw (P1)--(Q1);
\node at (4,3.5) {$\longrightarrow$};
\node at (1.5,0) {$G$};
\end{tikzpicture}
\hspace{4mm}
\begin{tikzpicture}[scale=0.6]
\foreach \i in {1,...,4}
{
\path (0,\i+0.5)  coordinate  (P\i);
\draw[fill] (P\i)  circle (2pt);
\node[left] at (P\i) {$x_{\i}$};
}
\foreach \i in {1,...,5}
{
\path (2,\i)  coordinate  (Q\i);
\draw[fill] (Q\i)  circle (2pt);
\node[right] at (Q\i) {$y_{\i}'$};
}
\draw (P4)--(Q4) (P4)--(Q3) (P4)--(Q2) (P4)--(Q1) (P4)--(Q5);
\draw (P3)--(Q1) (P3)--(Q2);
\draw (P2)--(Q1) (P2)--(Q2);
\draw (P1)--(Q1);
\node at (1.5,0) {$G_{R}$};
\end{tikzpicture}
\caption{A bipartite graph and its reduced graph.}
\end{figure}
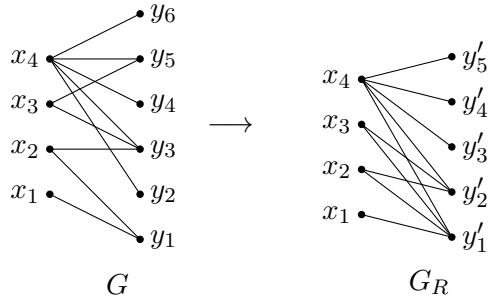\label{fig-red}

Given two $k$--dimensional vectors $c=(c_1,\ldots ,c_k)$ and
$c'=(c'_1,\ldots ,c'_k)$, we say that $c\preceq c'$ if after reordering the components
of each vector in nondecreasing order, the $i$--th component of $c$
is not larger than the $i$--th component of $c'$. This definition is
motivated by the following remark.

\begin{remark}
\label{rem:subforest} 
Let $F, F'$ be two forests of
stars with centers $x_1,\ldots ,x_k$ and $x'_1,\ldots ,x'_k$
respectively. Then $F$ is isomorphic to a subgraph of $F'$ if and
only if $(d_F(x_1),\ldots ,d_F(x_k))\preceq (d_{F'}(x_1'),\ldots
,d_{F'}(x_k'))$.
\end{remark}

Given two sequences $d=(d_1\le \cdots  \le d_k)$ and $b=(b_1\le \cdots \le b_n)$ of nonnegative integers with $\sum_id_i=\sum_jb_j$,   denote by $\NN(d,b)$ the set of $k\times n$ matrices $A$ with nonnegative integer entries such that the 
row sums  satisfy $\sum_j a_{ij} =d_i$, $i=1,\ldots ,k$ and the column sums  satisfy  $\sum_i a_{ij} =b_j$, $j=1,\ldots ,n$.

\begin{definition}[Ascending matrix]  
We say that a matrix  $A\in\NN(d,b)$ if  in addition to the row sum being the  sequence $d$ and the  column sum  the sequence $b$, we have
$$
A_1\preceq A_2\preceq \cdots \preceq A_n,
$$
where $A_j$ denotes the $j$--th column of $A$.
\end{definition}

For convenience we use the following notation for sequences. The constant sequence with $r$ entries equal to $x$ is denoted by $x^r$ and $(x^r, y^s)$ denotes the concatenation of $x^r$ and $y^s$. Also, for an integer $x$ we denote by $x^-$ the ascending sequence $x^-=(1,2,\ldots ,x-1,x)$. Sums and differences of sequences of the same length are understood to be componentwise.

We will use appropriate ascending matrices to define multigraphs which will lead to starforest-ASD as stated in Proposition \ref{prop:goodascending} below. We recall that the bipartite adjacency matrix of a bipartite multigraph $H$ with color classes $X=\{x_1,\ldots ,x_k\}$ and $Z=\{ z_1,\ldots ,z_n\}$ is the $(k\times n)$ matrix $A$ where $a_{ij}$ is the number of edges joining $x_i$ with $z_j$.

We need a last definition which is borrowed from \cite{hagg}.

\begin{definition}[Sequential coloring]
A bipartite multigraph $H$ with degree sequence $d_X=(d_1\le\cdots \le d_k)$ on the color class $X$ has a {\it sequential coloring} for $X$ if there is a proper edge coloring of $H$ such that the edges incident with vertex $x_i$ receive colors $\{1,\ldots ,d_i\}$ for each $i$.
\end{definition}

\begin{proposition}
\label{prop:goodascending} 
A reduced bipartite graph $G=G(X,Y)$ with degree--sequence $d_X=(d_1\le \cdots \le d_k)$ has a star--forest--ASD with centers of stars in $X$ if and only if there is an $A\in\NN(d_X,n^-)$ such that the bipartite multigraph $H=H(X,Z)$ with bipartite incidence matrix  $A$ admits a sequential coloring.
\end{proposition}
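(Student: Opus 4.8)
The plan is to set up a direct dictionary between star--forest--ASDs of the reduced graph $G$ and pairs consisting of a matrix $A\in\NN(d_X,n^-)$ together with a sequential colouring of the associated multigraph $H=H(X,Z)$. The point that makes everything work is that, since $G=G_R$ is reduced, the edges at $x_i$ are exactly $x_i y'_1,\ldots,x_i y'_{d_i}$, so they carry canonical labels $1,\ldots,d_i$ inherited from the index of their $Y'$--endpoint. These labels will play the role of the colours in the sequential colouring, and each column vertex $z_j\in Z$ will correspond to the $j$--th member $G_j$ of the decomposition.

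For the forward implication, suppose $G=G_1\oplus\cdots\oplus G_n$ is a star--forest--ASD with all star centres in $X$ and $|E(G_j)|=j$. I set $a_{ij}=d_{G_j}(x_i)$, the number of leaves of the star of $G_j$ centred at $x_i$, and let $A=(a_{ij})$. Summing a row over $j$ recovers $d_i$ because the $G_j$ partition the edges at $x_i$; summing a column over $i$ gives $j$ because every edge of $G_j$ has exactly one endpoint in $X$; and $G_j$ being isomorphic to a subgraph of $G_{j+1}$ yields $A_j\preceq A_{j+1}$ by Remark \ref{rem:subforest}. Hence $A\in\NN(d_X,n^-)$. To colour $H$, let $L_{ij}\subseteq\{y'_1,\ldots,y'_{d_i}\}$ be the set of leaves of the star of $G_j$ at $x_i$, so $|L_{ij}|=a_{ij}$ equals the number of $x_i z_j$ edges of $H$; I match these $a_{ij}$ parallel edges bijectively with $L_{ij}$ and colour the edge matched to $y'_\ell$ with colour $\ell$. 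Since the $L_{ij}$ partition $\{y'_1,\ldots,y'_{d_i}\}$ as $j$ varies, each colour $1,\ldots,d_i$ appears exactly once at $x_i$, which is simultaneously properness at $x_i$ and the sequential condition; and properness at $z_j$ is precisely the statement that no $y'_\ell$ lies in two different $L_{ij}$, i.e.\ that $y'_\ell$ has degree at most one in $G_j$, which holds because $G_j$ is a star forest centred in $X$.

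The reverse implication simply runs this backwards. Given $A$ and a sequential colouring of $H$, I define $G_j$ by putting $x_i y'_\ell\in E(G_j)$ exactly when the $H$--edge of colour $\ell$ at $x_i$ is routed to $z_j$. Because the colours at $x_i$ are precisely $1,\ldots,d_i$ and each occurs once, every edge of $G$ is placed in exactly one $G_j$ and $d_{G_j}(x_i)=a_{ij}$, so the column sums force $|E(G_j)|=j$. Properness of the colouring at $z_j$ means at most one edge of each colour $\ell$ reaches $z_j$, hence each $y'_\ell$ has degree at most one in $G_j$ and $G_j$ is a star forest with centres in $X$; finally $A_j\preceq A_{j+1}$ together with Remark \ref{rem:subforest} gives that $G_j$ is isomorphic to a subgraph of $G_{j+1}$, so $G_1\oplus\cdots\oplus G_n$ is the desired ASD.

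This proposition is essentially a clean reformulation, so I expect no computational difficulty; the one genuinely load--bearing point is the equivalence, on the $Z$--side of $H$, between properness of the colouring and the requirement that each $G_j$ be a \emph{star forest} rather than an arbitrary bipartite subgraph of $G$. Keeping the three pieces of data attached to a single edge---its centre $x_i$, its colour/label $\ell$, and its target $z_j$---consistent across the two directions is the only thing that needs care.
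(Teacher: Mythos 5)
Your proposal is correct and follows essentially the same route as the paper's own proof: the same matrix $a_{ij}=d_{F_j}(x_i)$ in the forward direction, the same bijective labelling of parallel edges by the $Y'$--indices of the leaves (your $L_{ij}$ is the paper's $I_{ij}$), and the same inverse dictionary turning a sequential colouring back into the forests $F_j$. Nothing is missing; the points you flag as load-bearing (properness at $z_j$ $\Leftrightarrow$ each $Y'$--vertex has degree at most one in each $F_j$, and reducedness giving the colours $\{1,\ldots,d_i\}$ at $x_i$) are exactly the ones the paper's proof relies on.
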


\begin{proof} Let $X=\{x_1,\ldots ,x_k\}$ and $Y=\{ y_1,\ldots ,y_{d_k}\}$ denote the sets in the bipartition of $G$.

Assume first that   $G=G(X,Y)$ is a  reduced bipartite graph with  $d_X=(d_1\le \cdots \le d_k)$ which admits a star--forest--ASD
$$
 G=F_1\oplus\cdots \oplus F_n
 $$ 
 with the centers of the stars in $X$. 

We  define the multigraph $H=H(X,Z)$ with $Z=\{ z_1,\ldots ,z_n\}$ by  placing  $d_{F_j}(x_i)$ parallel edges joining $x_i$ with $z_j$, where $d_{F_j}(x_i)$ denotes the degree of $x_i$ in the forest $F_j$. In this way, the bipartite adjacency matrix $A$ of $H$ is a $(d_X,n^-)$--matrix. Moreover, since $F_j$ is isomorphic to a subgraph of $F_{j+1}$, the matrix $A$ is $(d,n^-)$--ascending. 

Next we define an edge--coloring of $H$ as follows.  Denote by $N_{F_j}(x_i)=\{y_{i_1},\ldots ,y_{i_s}\}$ the set of neighbours of $x_i$ in the forest $F_j$. Then $x_i$ is joined in $H$ to $z_j$ with  $s$ parallel edges. We color these edges with the subscripts $\{i_1,\ldots ,i_s\}$ of the neighbours of $x_i$ in $F_j$, by assigning one of these colors to each parallel edge bijectively. In other words, if we define $I_{ij}\subset \{1,\ldots ,d_k\}$ by 
$$
I_{ij}=\{h\in \{1,\ldots ,d_k\}: x_iy_h\in E(F_j)\},
$$
then the coloring is defined by any bijection from the $|I_{ij}|$ parallel edges joining $x_i$ with $z_j$ in $H$ to $I_{ij}$. 

Since the original graph $G$ is simple and the star--forests $F_1,\ldots ,F_n$ form a decomposition of $G$, no two edges incident to a vertex $x_i$ receive the same color. On the other hand, since each star--forest $F_j$ has its stars centered in vertices in $X$, and therefore each vertex in $Y$ has degree at most one in each $F_j$, by the bijections which define the coloring, no two edges incident to $z_j$ receive the same color. Hence, the coloring is proper. Moreover, since the graph $G$ is reduced, the edges incident to $x_i$ receive the colors $\{1,\ldots ,d_i\}$ and the coloring is sequential. This completes the if part of the proof.

Reciprocally, assume  that $A\in\NN(d_X,n^-)$ and that  the multigraph $H=H(X,Z)$ with bipartite adjacency matrix $A$ has a sequential coloring.

Let $Z=\{z_1,\ldots ,z_n\}$, where vertex $z_i$ has degree $i$ (the sum of entries of column $i$ of $A$), in $H$,$1\le i\le n$. Let $c:E\to \{1,2,\ldots ,d_k\}$ be a sequential coloring of $H$, so that the edges incident to $x_i$ receive colors $\{1,\ldots ,d_i\}$.   

Each $z_j$ will be associated to the subgraph $F_j$ of $G$ defined as follows. For each edge $x_iz_j$ of color $h$ we declare the edge $x_iy_h$ to be in $F_j$. This way we obtain a subgraph of $G$ because $h\le d(x_i)$ (the coloring is sequential) and the graph $G$ is reduced. Moreover, since the coloring is proper, the degree of every vertex $y_h$ in $F_j$ is at most one. Hence $F_j$ is a forest of stars and it has $j$ edges. Moreover, for $j\neq j'$, the subgraphs  $F_j$ and $F_{j'}$ are edge--disjoint again from the fact that the coloring is proper. Finally, since the matrix $A$ is ascending, $F_i$ is isomorphic to a subgraph of $F_{i+1}$ for each $i=1,\cdots,n-1$. Hence,
$$
G=F_1\oplus\cdots \oplus F_n
$$
is a starforest--ASD for $G$. This completes the proof.
\end{proof}

Figure \ref{figura} illustrates the statement of Proposition \ref{prop:goodascending}. 
\begin{figure}[h]
\begin{tikzpicture}[scale=0.6]
\node at (1,7) {$G$};
\foreach \i in {1,...,5}
{
\path (0,6.5-\i) coordinate (P\i);
\draw[fill] (P\i) circle (2pt);
\node[left] at (P\i) {$x_{\i}$};
}
\foreach \i in {1,...,6}
{
\path (2,7-\i) coordinate (Q\i);
\draw[fill] (Q\i) circle (2pt);
\node[right] at (Q\i) {$y_{\i}$};
}
\draw[thin,gray] (P5)--(Q6) (P5)--(Q5) (P5)--(Q4) (P5)--(Q3) (P5)--(Q2) (P5)--(Q1);
\draw[thin,gray] (P4)--(Q3) (P4)--(Q2) (P4)--(Q1);
\draw[thin,gray] (P3)--(Q3) (P3)--(Q2) (P3)--(Q1);
\draw[thin,gray] (P2)--(Q2) (P2)--(Q1);
\draw[thin,gray] (P1)--(Q1) ;
\node at (3.5,3.5) {$=$};
\end{tikzpicture}
\begin{tikzpicture}[scale=0.6]
\node at (1,7) {$F_1$};
\foreach \i in {1,...,5}
{
\path (0,6.5-\i) coordinate (P\i);
\draw[fill] (P\i) circle (2pt);
}
\foreach \i in {1,...,6}
{
\path (2,7-\i) coordinate (Q\i);
\draw[fill] (Q\i) circle (2pt);
}
\draw[thin,gray] (P4)--(Q1) ;
\node at (3.5,3.5) {$\oplus$};
\end{tikzpicture}
\begin{tikzpicture}[scale=0.6]
\node at (1,7) {$F_2$};
\foreach \i in {1,...,5}
{
\path (0,6.5-\i) coordinate (P\i);
\draw[fill] (P\i) circle (2pt);
}
\foreach \i in {1,...,6}
{
\path (2,7-\i) coordinate (Q\i);
\draw[fill] (Q\i) circle (2pt);
}
\draw[thin,gray]  (P5)--(Q1);
\draw[thin,gray] (P3)--(Q2);
 \node at (3.5,3.5) {$\oplus$};
\end{tikzpicture}
\begin{tikzpicture}[scale=0.6]
\node at (1,7) {$F_3$};
\foreach \i in {1,...,5}
{
\path (0,6.5-\i) coordinate (P\i);
\draw[fill] (P\i) circle (2pt);
}
\foreach \i in {1,...,6}
{
\path (2,7-\i) coordinate (Q\i);
\draw[fill] (Q\i) circle (2pt);
}
\draw[thin,gray]  (P5)--(Q2);
\draw[thin,gray] (P4)--(Q3);
\draw[thin,gray] (P2)--(Q1);
\node at (3.5,3.5) {$\oplus$};
\end{tikzpicture}
\begin{tikzpicture}[scale=0.6]
\node at (1,7) {$F_4$};
\foreach \i in {1,...,5}
{
\path (0,6.5-\i) coordinate (P\i);
\draw[fill] (P\i) circle (2pt);
}
\foreach \i in {1,...,6}
{
\path (2,7-\i) coordinate (Q\i);
\draw[fill] (Q\i) circle (2pt);
}
\draw[thin,gray]  (P5)--(Q4);
\draw[thin,gray]  (P5)--(Q3);
\draw[thin,gray] (P3)--(Q1);
\draw[thin,gray] (P2)--(Q2);

\node at (3.5,3.5) {$\oplus$};
\end{tikzpicture}
\begin{tikzpicture}[scale=0.6]
\node at (1,7) {$F_5$};

\foreach \i in {1,...,5}
{
\path (0,6.5-\i) coordinate (P\i);
\draw[fill] (P\i) circle (2pt);
}
\foreach \i in {1,...,6}
{
\path (2,7-\i) coordinate (Q\i);
\draw[fill] (Q\i) circle (2pt);
}
\node[white] at (Q6) {$y_1$};
\draw[thin,gray] (P5)--(Q5) (P5)--(Q6);
\draw[thin,gray] (P4)--(Q2);
\draw[thin,gray]  (P3)--(Q3);
\draw[thin,gray] (P1)--(Q1);
\end{tikzpicture}
 
\begin{center}
\begin{tikzpicture}[scale=0.6]

\node at (-9,2) {$A=
\left(\begin{array}{ccccc}
0 & 0 & 0 & 0 & 1 \\
0 & 0 & 1 & 1 & 0 \\
0 & 1 & 0 & 1 & 1 \\
1 & 0 & 1 & 0 & 1 \\
0 & 1 & 1 & 2 & 2
\end{array}\right)
$};

\foreach \i in {1,...,5}
{
\path (0,5-\i) coordinate (P\i);
\draw[fill] (P\i) circle (2pt);
\path (2,5-\i) coordinate (Q\i);
\draw[fill] (Q\i) circle (2pt);
\node[left] at (P\i) {$x_{\i}$};
\node[right] at (Q\i) {$z_{\i}$};
}
\draw[thin,gray] (P5) to [out=30,in=150] (Q5) ;
\draw[thin,gray] (P5) to [out=-30,in=-150] (Q5) ;
\draw[thin,gray] (P5) to [out=40,in=180] (Q4) ;
\draw[thin,gray] (P5) to [out=0,in=-120] (Q4) ;
\draw[thin,gray]  (P5)--(Q3) (P5)--(Q2) ;
\draw[thin,gray] (P4)--(Q5) (P4)--(Q3) (P4)--(Q1);
\draw[thin,gray] (P3)--(Q5) (P3)--(Q4)  (P3)--(Q2) ;
\draw[thin,gray] (P2)--(Q4) (P2)--(Q3);
\draw[thin,gray] (P1)--(Q5);
\node at (-2,2) {$H=$};
\end{tikzpicture}
\end{center}

\begin{tikzpicture}[scale=0.6]
\node at (1,5) {$y_6$};
\foreach \i in {1,...,5}
{
\path (0,5-\i) coordinate (P\i);
\draw[fill] (P\i) circle (2pt);
\path (2,5-\i) coordinate (Q\i);
\draw[fill] (Q\i) circle (2pt);
}
\draw[thin,gray] (P5) to [out=30,in=150] (Q5) ;
\draw[thin,white] (P5) to [out=-30,in=-150] (Q5) ;
\node at (3,2) {$\oplus$};
\node at (-2,2) {$H=$};
\end{tikzpicture}
\begin{tikzpicture}[scale=0.6]
\node at (1,5) {$y_5$};
\foreach \i in {1,...,5}
{
\path (0,5-\i) coordinate (P\i);
\draw[fill] (P\i) circle (2pt);
\path (2,5-\i) coordinate (Q\i);
\draw[fill] (Q\i) circle (2pt);
}
 \draw[thin,gray] (P5) to [out=-30,in=-150] (Q5) ;
\node at (3,2) {$\oplus$};
\end{tikzpicture}
\begin{tikzpicture}[scale=0.6]
\node at (1,5) {$y_4$};
\foreach \i in {1,...,5}
{
\path (0,5-\i) coordinate (P\i);
\draw[fill] (P\i) circle (2pt);
\path (2,5-\i) coordinate (Q\i);
\draw[fill] (Q\i) circle (2pt);
} 
\draw[thin,gray] (P5) to [out=0,in=-120] (Q4) ;
\draw[thin,white] (P5) to [out=-30,in=-150] (Q5) ;
\node at (3,2) {$\oplus$};
\end{tikzpicture}
\begin{tikzpicture}[scale=0.6]
\node at (1,5) {$y_3$};
\foreach \i in {1,...,5}
{
\path (0,5-\i) coordinate (P\i);
\draw[fill] (P\i) circle (2pt);
\path (2,5-\i) coordinate (Q\i);
\draw[fill] (Q\i) circle (2pt);
}
\draw[thin,gray] (P5) to [out=40,in=180] (Q4) ;
\draw[thin,gray]   (P4)--(Q3) ;
\draw[thin,gray] (P3)--(Q5) ;
\draw[thin,white] (P5) to [out=-30,in=-150] (Q5) ;
\node at (3,2) {$\oplus$};
\end{tikzpicture}
\begin{tikzpicture}[scale=0.6]
\node at (1,5) {$y_2$};
\foreach \i in {1,...,5}
{
\path (0,5-\i) coordinate (P\i);
\draw[fill] (P\i) circle (2pt);
\path (2,5-\i) coordinate (Q\i);
\draw[fill] (Q\i) circle (2pt);
}
\draw[thin,gray]   (P5)--(Q3);
\draw[thin,gray] (P4)--(Q5);
\draw[thin,gray]   (P3)--(Q2);
\draw[thin,gray] (P2)--(Q4) ;
 \draw[thin,white] (P5) to [out=-30,in=-150] (Q5) ;
\node at (3,2) {$\oplus$};
\end{tikzpicture}
\begin{tikzpicture}[scale=0.6]
\node at (1,5) {$y_1$};
\foreach \i in {1,...,5}
{
\path (0,5-\i) coordinate (P\i);
\draw[fill] (P\i) circle (2pt);
\path (2,5-\i) coordinate (Q\i);
\draw[fill] (Q\i) circle (2pt);
}
\draw[thin,gray]   (P5)--(Q2);
\draw[thin,gray] (P4)--(Q1);
\draw[thin,gray] (P3)--(Q4) ;
\draw[thin,gray] (P2)--(Q3);
\draw[thin,gray] (P1)--(Q5);
 \draw[thin,white] (P5) to [out=-30,in=-150] (Q5) ;
\end{tikzpicture}
\caption{Illustration of Proposition  \ref{prop:goodascending} with $d_X=(1,2,3,3,6)$ and $n=5$.}
\label{figura}
\end{figure}
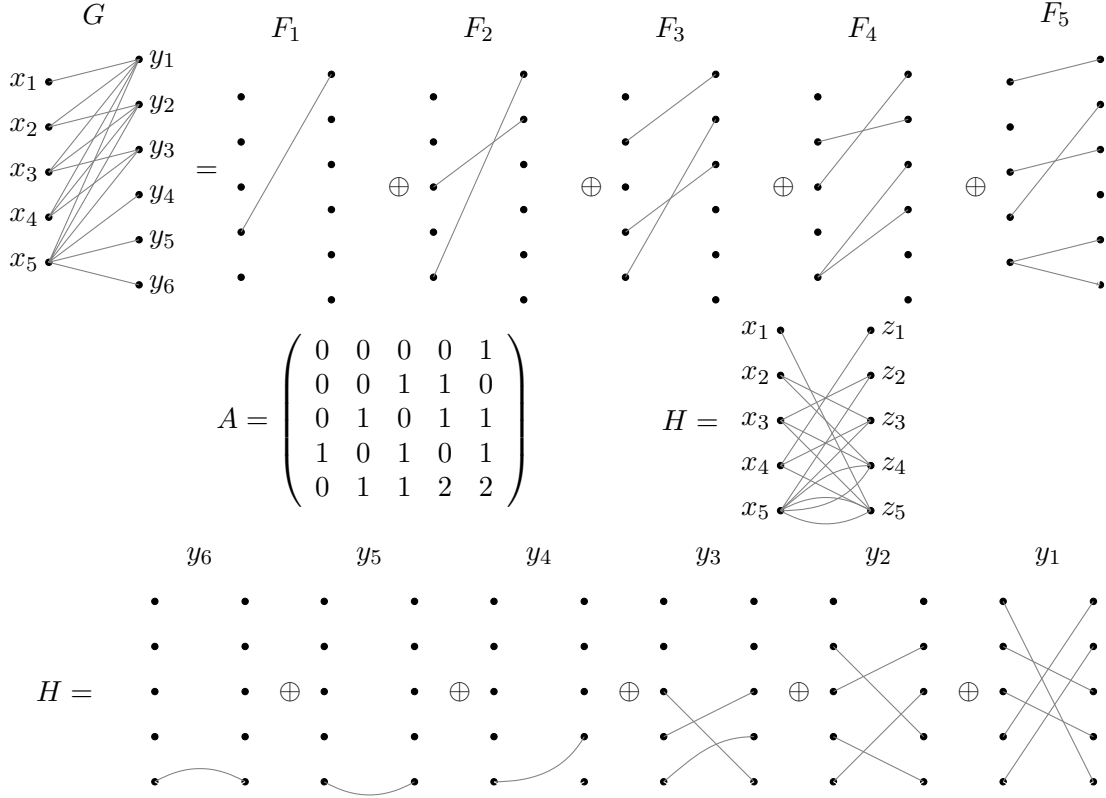

In order to prove the main result for reduced graphs we will show the existence of an appropriate ascending matrix and that the multigraph associated to it admits a sequential coloring.

According to Remark \ref{rem:subforest},  the above mentioned result \cite{faudree} on the existence of a star--forest--ASD of every star forest with ${n+1\choose 2}$ edges can be reformulated as the existence of an ascending matrix $A\in\NN(d,n^-)$ for every sequence $d=(d_1\le\cdots \le d_k)$ with $\sum_{i=1}^kd_i={n+1\choose 2}$:

\begin{lemma}\label{lemasc}
For every sequence  $d=(d_1\le \cdots \le d_k)$  there is a $(d,n^-)$--ascending matrix $C$.
\end{lemma}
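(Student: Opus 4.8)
The plan is to read Lemma~\ref{lemasc} as a restatement, in the language of ascending matrices, of the Faudree--Gy\'arf\'as--Schelp theorem \cite{faudree} that every forest of stars of size ${n+1\choose 2}$ admits a star--forest--ASD. I use throughout the standing hypothesis $\sum_{i=1}^k d_i={n+1\choose 2}$, without which $\NN(d,n^-)$ is empty since the columns of $n^-=(1,2,\ldots,n)$ sum to ${n+1\choose 2}$. Concretely, to a sorted sequence $d=(d_1\le\cdots\le d_k)$ I associate the star forest $S$ with centres $x_1,\ldots,x_k$ in which $x_i$ carries exactly $d_i$ leaves, so that $S$ has $\sum_i d_i={n+1\choose 2}$ edges, and I aim to extract the desired matrix from any star--forest--ASD of $S$.

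First I would invoke \cite{faudree} to obtain a decomposition $S=F_1\oplus\cdots\oplus F_n$ with $|E(F_j)|=j$ and $F_j$ isomorphic to a subgraph of $F_{j+1}$ for $1\le j<n$. Then I would define the $k\times n$ matrix $C$ by $C_{ij}=d_{F_j}(x_i)$, the degree of the centre $x_i$ in the $j$--th part. Since the $F_j$ partition the edges of $S$, the $i$--th row of $C$ sums to $d_S(x_i)=d_i$, and since every edge of $F_j$ is incident with exactly one centre, the $j$--th column of $C$ sums to $|E(F_j)|=j$; thus $C\in\NN(d,n^-)$. Finally, the relation $F_j\cong$ a subgraph of $F_{j+1}$ translates, by Remark~\ref{rem:subforest}, into $C_j\preceq C_{j+1}$ for the successive columns $C_1,\ldots,C_n$, which is exactly the ascending condition; hence $C$ is the required $(d,n^-)$--ascending matrix.

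The only point that needs care --- and the step I would write out explicitly --- is that each part $F_j$ really is a forest of stars whose centres lie in $X$, so that $d_{F_j}(x_i)$ is well defined and Remark~\ref{rem:subforest} applies with the single, fixed labelling of centres $x_1,\ldots,x_k$. This is immediate for subgraphs of $S$: every edge of $S$ joins some centre $x_i$ to a leaf, distinct centres lie in distinct components, and no component avoids a centre, so each $F_j\seq S$ is a disjoint union of stars centred at the $x_i$ with $d_{F_j}(x_i)>0$. With this observation the translation is forced and the lemma follows. I note that one could instead attempt a direct inductive construction of $C$ (peeling off the last column $C_n$ of sum $n$ and recursing on $d-C_n$), but maintaining the ascending relation $C_{n-1}\preceq C_n$ through the recursion is delicate, so the reduction to \cite{faudree} via Remark~\ref{rem:subforest} is the cleaner route.
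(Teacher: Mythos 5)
Your proof is correct and follows exactly the paper's route: the paper gives no separate proof of Lemma~\ref{lemasc}, presenting it precisely as the reformulation of the Faudree--Gy\'arf\'as--Schelp result \cite{faudree} via Remark~\ref{rem:subforest}, which is the translation you carry out (with the row/column sum checks and the centre-identification detail written out explicitly). Your note that the standing hypothesis $\sum_i d_i={n+1\choose 2}$ is needed for $\NN(d,n^-)$ to be nonempty is also consistent with the sentence in which the paper introduces the lemma.
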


We next show that there exists an ascending matrix  $A\in\NN(d, n^-)$  of a particular shape that will be useful to prove the existence of star--forest--ASD for reduced graphs with that degree sequence.

We note that, if $b_1\le\cdots \le b_n$, then each matrix  $A\in\NN(a,b)$ with $(0,1)$ entries is ascending. The support of a matrix $B$ is the set of positions with nonzero entries. We observe that if  $B\in\NN(a,b)$, $T\in\NN(a',b')$ and $B$ and $T$ have disjoint support and the same dimensions,  then $B+T\in\NN(a+a',b+b')$. The last sentence also holds if the support of $T$ and $B$ intersect in a square submatrix and $T$ has constant entries in this submatrix. The above observations will be used in the proof of the next Lemma, which is illustrated in  Figure \ref{fig:lasc}.

\begin{lemma}\label{lasc}
Let $d=(d_1\le \cdots \le d_k)$ be a sequence satisfying $\sum_{d_i}={n+1\choose 2}$  and
$$
d_{k-i}\ge n-i,\; i=0,\ldots ,k-1.
$$
There is a matrix  $A\in\NN(d,n^-)$ such that $a_{ij}\ge 1$ for each $(i,j)$ with $i+j\ge k+1$.
\end{lemma}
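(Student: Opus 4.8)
The plan is to build $A$ explicitly as a sum $A=S+C$, where $S$ is the $(0,1)$ ``staircase'' matrix defined by $S_{ij}=1$ exactly when $i+j\ge k+1$ (and $S_{ij}=0$ otherwise), and $C\ge 0$ is a correction supported on the last columns. The point of $S$ is that it already places a $1$ in every staircase position, so that $a_{ij}=S_{ij}+C_{ij}\ge 1$ there for \emph{any} nonnegative $C$; the role of $C$ is only to correct the row sums to $d$ while keeping the column sums equal to $n^-$ and the matrix ascending.

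First I would record the margins of $S$. A direct count gives that the $j$-th column of $S$ has sum $\min(j,k)$ and the $i$-th row has sum $\max(0,n-k+i)$. Writing $\delta_i=\max(0,n-k+i)$, the hypothesis $d_{k-i}\ge n-i$ is exactly $d_i\ge \delta_i$ for every $i$. If $n\le k$ then $\sum_i\delta_i=\binom{n+1}{2}=\sum_i d_i$, so $d_i=\delta_i$ for all $i$, the column sums of $S$ are already $1,2,\dots,n$, and $A=S$ is the desired matrix: it is a $(0,1)$ matrix with nondecreasing column sums, hence ascending by the observation preceding the lemma, and $a_{ij}\ge 1$ on the staircase by construction. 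So I may assume $k<n$.

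Now assume $k<n$. Here the first $k$ columns of $S$ carry column sums $1,\dots,k$, and these columns are in fact forced (in column $j\le k$ the $j$ staircase positions must each equal $1$, since the column sum is only $j$). Thus $C$ must vanish on columns $1,\dots,k$ and, on columns $k+1,\dots,n$, must have column sums $1,2,\dots,n-k$ (there $S$ contributes $k$ to each column); its row sums must be $r_i:=d_i-\delta_i=d_i-(n-k+i)\ge 0$. A short computation confirms consistency: $\sum_i r_i=\binom{n+1}{2}-\bigl(kn-\binom{k}{2}\bigr)=\binom{n-k+1}{2}=\sum_{m=1}^{n-k}m$. By Lemma~\ref{lemasc}, applied to the nondecreasing rearrangement of $(r_1,\dots,r_k)$ and to $(n-k)^-$, there is an ascending $k\times(n-k)$ matrix with those row sums and column sums $1,\dots,n-k$; since the ascending property depends only on the multiset of entries of each column, it is invariant under row permutations, so I may permute its rows to obtain a matrix $A''$ whose $i$-th row sums to $r_i$ and which is still ascending. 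Embedding $A''$ in columns $k+1,\dots,n$ (and zeros elsewhere) defines $C$, and I set $A=S+C$.

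It remains to verify that $A$ is ascending, which is the only delicate point; the margins and the inequalities $a_{ij}\ge 1$ are immediate from the construction. Among the first $k$ columns, $A$ agrees with the $(0,1)$ matrix $S$ whose column sums increase, so these form a $\preceq$-chain ending at the all-ones column $A_k$. For $j>k$ one has $A_j=\mathbf{1}+A''_{\,j-k}$, and since adding the fixed vector $\mathbf{1}$ to every column preserves $\preceq$ (sorting commutes with adding a constant), the chain $A_{k+1}\preceq\cdots\preceq A_n$ follows from $A''$ being ascending, while $A_k=\mathbf{1}\preceq\mathbf{1}+A''_{\,1}=A_{k+1}$ bridges the two blocks. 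Hence $A_1\preceq\cdots\preceq A_n$ and $A\in\NN(d,n^-)$, completing the proof. The main obstacle is therefore not the existence of a matrix with the prescribed margins and a staircase of $1$'s (that part is easy), but arranging simultaneously that it be ascending; the construction handles this by reducing the ascending requirement on the surplus block to Lemma~\ref{lemasc} and exploiting that $\preceq$ is preserved both under row permutations and under adding the constant vector $\mathbf{1}$.
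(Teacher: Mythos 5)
Your proof is correct and follows essentially the same route as the paper's: the staircase matrix (the paper's $T$, your $S$) plus an ascending matrix with column sums $(n-k)^-$ supplied by Lemma~\ref{lemasc}, embedded in the last $n-k$ columns so that the all-ones columns of the staircase preserve the ascending property. In fact you are somewhat more careful than the paper on two points it glosses over: the degenerate case $k\ge n$, and the fact that the residual row sums $r_i=d_i-(n-k+i)$ need not be nondecreasing, which you repair by applying Lemma~\ref{lemasc} to their sorted rearrangement and then permuting rows, noting that the ascending property is invariant under row permutations.
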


\begin{proof} 
Consider the $(k\times n)$ matrix $T$ with $t_{ij}=1$ for $i+j\ge k+1$ and $t_{ij}=0$ otherwise. Let $d'_k=d_k-n, d'_{k-1}=d_{k-1}-(n-1),\ldots ,d'_1=d_1-(n-k)$. Since $\sum_id'_i=(n-k)+(n-k-1)+\cdots +2+1$, by Lemma \ref{lemasc}  for $d'=(d'_1,\ldots ,d'_k)$ there is a $(k\times (n-k))$ matrix $A\in\NN(d',(n-k)^-)$. Extend  this matrix to a $k\times n$ matrix $A'$ by adding zero columns to the right. Since the last $(n-k)$ columns of $T$ are the all--ones vectors, the matrix $A=A'+T$ still has the ascending column property and, by construction, it is in  $\NN(d,n^-)$ with nonzero entries in the positions $(i,j)$ with $i+j\ge k+1$.
\end{proof}

\begin{figure}[h]
$$
T=\left(\begin{array}{ccccccc}
0 & 0 & 0 & 1& 1 & 1 & 1 \\
0 & 0 & 1 & 1 & 1 & 1 & 1 \\
0 & 1 & 1 & 1 & 1 & 1 & 1 \\
1 & 1 & 1 & 1 & 1 & 1 & 1
\end{array}\right);\;\;
A'=\left(\begin{array}{ccccccc}
0 & 0 & 0 & 0 & 0 & 0 & 0 \\
0 & 0 & 0 & 0 & 1 & 0 & 0 \\
0 & 0 & 0 & 0 & 0 & 1 & 2\\
0 & 0 & 0 & 0 & 0 & 1 & 1
\end{array}\right)
$$

$$
A=A'+T=
\left(\begin{array}{ccccccc}
0 & 0 & 0 & 1& 1 & 1 & 1 \\
0 & 0 & 1 & 1 & 2 & 1 & 1 \\
0 & 1 & 1 & 1 & 1 & 2 & 3 \\
1 & 1 & 1 & 1 & 1 & 2 & 2
\end{array}\right)
$$
\caption{An illustration of Lemma \ref{lasc} with $n=7$ and $d=(4,6,9,9)$.}
\end{figure}\label{fig:lasc}

Next Lemma gives a sufficient condition for a degree sequence of a reduced graph to admit a star-forest--ASD.

\begin{lemma}\label{lemreduc}
Let  $d=(d_1\le \cdots \le d_k)$ be a sequence of positive integers with $\sum_id_i={n+1\choose 2}$.  If
 $$
 d_{k-i}\ge n-i,\; \;i=0,1,\ldots ,k-1,
 $$ then the reduced graph with degree sequence $d$ admits a star--forest ASD.
\end{lemma}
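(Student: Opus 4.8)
The plan is to go through Proposition~\ref{prop:goodascending}: since the target graph is reduced with $X$--degree sequence $d$, it is enough to exhibit a single matrix $A\in\NN(d,n^-)$ whose associated bipartite multigraph $H=H(X,Z)$ carries a \emph{sequential} coloring. For $A$ I would take the matrix produced by Lemma~\ref{lasc}, so that $a_{ij}\ge 1$ for every $(i,j)$ with $i+j\ge k+1$; recall that such an $A$ exists precisely because of the hypothesis $d_{k-i}\ge n-i$, which makes the residual sequence $d'$ nonnegative and lets Lemma~\ref{lemasc} apply. It then only remains to sequentially color this one multigraph $H$. I would first restate what a sequential coloring is in matching language: unwinding the definition, a sequential coloring of $H$ on $X$ is the same as an edge--disjoint family of matchings $M_1,\dots,M_{d_k}$ in $H$ such that $M_t$ saturates exactly the set $X_t=\{x_i:d_i\ge t\}$ on the $X$--side (the edges of color $t$). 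Because $d$ is nondecreasing, each $X_t$ is a suffix $\{x_{p_t},\dots,x_k\}$, and $\deg_H(x_i)=d_i=\#\{t:x_i\in X_t\}$, so the edge counts match up automatically and the whole problem is to realize these nested saturating matchings.

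I would then construct the matchings one color at a time, for $t=1,2,\dots,d_k$, using Hall's theorem at each stage. The arithmetic is clean: before $M_t$ is chosen, each $x_i\in X_t$ still has $d_i-(t-1)\ge 1$ unused edges, regardless of the earlier choices, so the only thing ever in question is whether $X_t$ can be saturated in the current residual multigraph. This is exactly where the shape of $A$ from Lemma~\ref{lasc} should pay off: the positivity $a_{ij}\ge 1$ for $i+j\ge k+1$ means that the last $n-k$ columns join \emph{every} vertex of $X$ and that the bottom--right of $A$ is completely filled, so neighborhoods $N(S)$ of subsets $S\seq X_t$ are large; moreover $|X_t|\le k\le n$, so there is always, a priori, room for a saturating matching. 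A concrete instance of how the structure supplies matchings is color $1$: since $X_1=X$, the anti--diagonal cells $(i,k+1-i)$, $i=1,\dots,k$, which all carry an edge, form a matching of all of $X$ into $\{z_1,\dots,z_k\}$ and can serve as $M_1$.

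The step I expect to be the genuine obstacle is verifying that Hall's condition \emph{persists} in the residual multigraph after earlier matchings have been deleted. As a small experiment shows, the matchings cannot be chosen carelessly — committing, say, the triangular block of $A$ to a fixed ``diagonal'' pattern $c(x_i,z_j)=i+j-k$ (which would force $x_i$ to use exactly the initial colors $\{1,\dots,i\}$ there) can overconstrain the dense columns and create an unavoidable clash, even though a global sequential coloring still exists. The real work is therefore to select the matchings coherently (for example always peeling anti--diagonal--type cells and spreading demand evenly across the dense columns $z_{k+1},\dots,z_n$) so that the residual graph retains enough of the complete bipartite block between $X$ and $\{z_{k+1},\dots,z_n\}$ to keep $N_H(S)\ge|S|$ for all $S\seq X_t$ at every stage, and to check via Remark~\ref{rem:subforest} that the ascending property of $A$ indeed makes the resulting forests nested. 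It is reassuring that the termwise hypothesis $d_{k-i}\ge n-i$ is only marginally stronger than the summed inequality $\sum_{i=0}^t d_{k-i}\ge\sum_{i=0}^t(n-i)$ that Lemma~\ref{lem:starforest} shows to be necessary; this is precisely the form one would expect to emerge as a defect--Hall condition, which strongly suggests that a Hall--type extraction of the matchings $M_t$ is the correct engine for the proof.
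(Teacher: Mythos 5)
Your reduction is exactly the paper's: invoke Proposition~\ref{prop:goodascending} and take the matrix $A$ of Lemma~\ref{lasc}, so that everything hinges on sequentially coloring the single multigraph $H$. But your argument stops where the actual difficulty begins. You correctly reformulate a sequential coloring as edge--disjoint matchings $M_1,\dots,M_{d_k}$ with $M_t$ saturating exactly $X_t=\{x_i:d_i\ge t\}$, and you propose to extract them one at a time by Hall's theorem; yet you give no argument that Hall's condition survives the earlier deletions --- indeed you concede that careless choices fail and that the ``real work'' is to choose the matchings coherently, without saying how. That unproved persistence claim \emph{is} the lemma: the hypothesis $d_{k-i}\ge n-i$ does not by itself convert into a defect--Hall verification in an arbitrary residual multigraph, and ``spreading demand evenly across the dense columns'' is a heuristic, not a proof.

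The paper closes this gap with a construction you would need to reproduce in some form, and notably it never iterates Hall on residual graphs. It exhibits $k$ explicit, pairwise edge--disjoint matchings $M'_1,\dots,M'_k$ supported on the dense cells $i+j\ge k+1$ (a modular anti--diagonal pattern, which exists precisely because $a_{ij}\ge 1$ there), and trims $M'_j$ to $M_j$ by keeping only the edges at those $x_i$ whose excess $\alpha_i=d_i-(n-k)$ is at least $j$. These trimmed matchings do two jobs simultaneously: they supply each $x_i$ with its top $t(i)=\min\{k,\alpha_i\}$ colors, and --- this is the quantitative point your sketch misses --- they reduce the degree of every high column $z_{n-i}$ (which has degree $n-i>n-k$ in $H$) down to at most $n-k$. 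After deleting them, every vertex of $X$ has degree at least $n-k$ and every vertex of $Z$ has degree at most $n-k$, so the coloring is finished by two standard facts: a bipartite multigraph has a matching saturating all its maximum--degree vertices (peel such matchings, assigning them the descending colors, until $X$ becomes $(n-k)$--regular on the $X$--side), and then K\"onig's edge--coloring theorem colors the remainder with the colors $1,\dots,n-k$. So your frame agrees with the paper's, but the engine --- the explicit disjoint matchings in the dense block and the degree bookkeeping on the $Z$--side that they guarantee --- is absent, and with it the proof.
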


\begin{proof} 
Let $A\in\NN(d,n^-)$ such that $a_{ij}\ge 1$ for each $(i,j)$ with $i+j\ge k+1$,  whose existence is ensured by Lemma \ref{lasc}.

Let $H$ be the bipartite multigraph with stable sets $X=\{x_1,\ldots ,x_k\}$ and $Z=\{z_1,\ldots ,z_n\}$ whose bipartite adjacency matrix is $A$.  We next show that $H$ admits a sequential coloring. The result will follow by Proposition \ref{prop:goodascending}.

Let $\alpha_i=d_i-(n-k)$, $1\le i\le k$. For each $i=1,\dots ,k$ denote by $M_i'$ the matching in $H$  formed by the $k$ edges
$$
M'_i=\{ x_{r}z_{s}: r+s\equiv\{i, n-k+i\} \pmod{n}\}.
$$
Such matchings exist in $H$  by the condition $a_{ij}\ge 1$ for each pair
$(i,j)$ with $i+j\le k+1$, and they are pairwise edge--disjoint. For each $j=0,1,\ldots ,k-1$, let $M_j\subset M'_j$ be obtained by selecting from $M'_j$ the edges incident to $x_i$ whenever $\alpha_i\ge j$.  In this way each $x_i$ is incident with the matchings $M_1,\ldots ,M_{t(i)}$ with $t(i)=\min\{k,\alpha_i\}$ and has degree at least $n-k$ in $M_1\oplus \cdots\oplus M_k$. On the other hand, by the condition on the degree sequence $d_X$, since $\alpha_i\ge k-i+1$, the vertex  $z_{n-i}$ is incident to $k-i$ edges in  $M_1\oplus\cdots \oplus M_k$.

Let $H'$ denote the bipartite multigraph obtained from $H$ by removing the edges in $M_1\oplus\cdots\oplus M_k$. Let $d'_X=(d'_1\le \cdots \le d'_k)$ be the
degree sequence of   $X$  in $H'$, where $d'_{i}=d_{i}-t(i)\ge n-k$. Moreover, each vertex $z_i$, $1\le i\le n$  has degree at most $n-k$ in $H'$ (see an example in Figure \ref{fig:lemreduc}.)

\begin{figure}[h]
\begin{tikzpicture}
\node at (0,1.2) {$H$};
\node at (0,0) {$\left(\begin{array}{ccccccc}
0 & 0 & 0 & 1& 1 & 1 & 1 \\
0 & 0 & 1 & 1 & 2 & 1 & 1 \\
0 & 1 & 1 & 1 & 1 & 2 & 3 \\
1 & 1 & 1 & 1 & 1 & 2 & 2
\end{array}\right)$};
\node at (2.5,0) {$=$};
\node at (5,1.2) {$M'_1$};
\node at (5,0) {$\left(\begin{array}{ccccccc}
0 & 0 & 0 & 0& 0 & 0 & 1 \\
0 & 0 & 0 & 0 & 0 & 1 & 0 \\
0 & 0 & 0 & 0 & 1 & 0 & 0 \\
0 & 0 & 0 & 1 & 0 & 0 & 0
\end{array}\right)$};
\node at (7.5,0) {$+$};
\node at (10,1.2) {$M'_2$};
\node at (10,0) {$\left(\begin{array}{ccccccc}
0 & 0 & 0 & 0& 0 & 0 & 0 \\
0 & 0 & 0 & 0 & 0 & 0 & 1 \\
0 & 0 & 0 & 0 & 0 & 1 & 0 \\
0 & 0 & 0 & 0 & 1 & 0 & 0
\end{array}\right)$};

\node at (0,-1.8) {$M'_3$};
\node at (0,-3) {$\left(\begin{array}{ccccccc}
0 & 0 & 0 & 0& 0 & 0 & 0 \\
0 & 0 & 0 & 1 & 0 & 0 & 0 \\
0 & 0 & 0 & 0 & 0 & 0 & 1 \\
0 & 0 & 0 & 0 & 0 & 1 & 0
\end{array}\right)$};
\node at (2.5,-3) {$+$};
\node at (5,-1.8) {$M'_4$};
\node at (5,-3) {$\left(\begin{array}{ccccccc}
0 & 0 & 0 & 0& 0 & 0 & 0 \\
0 & 0 & 0 & 0 & 0 & 0 & 0 \\
0 & 0 & 0 & 1 & 0 & 0 & 0 \\
0 & 0 & 0 & 0 & 0 & 0 & 1
\end{array}\right)$};
\node at (7.5,-3) {$+$};
\node at (10,-1.8) {$H'$};
\node at (10,-3) {$\left(\begin{array}{ccccccc}
0 & 0 & 0 & 1& 1 & 1 & 0 \\
0 & 0 & 1 & 1 & 1 & 0 & 0\\
0 & 1 & 1 & 0 & 0 & 1 & 2 \\
1 & 1 & 1 & 0 & 0 & 1 & 1
\end{array}\right)$};

\draw[lightgray] (4.9,-0.8)--(6.9,1);

\draw[lightgray] (10.4,-0.8)--(11.9,0.5);
\draw[lightgray] (9.8,0.5)--(10.2,0.9);

\draw[lightgray] (0.9,-3.8)--(1.7,-3.1);
\draw[lightgray] (0,-2.8)--(0.7,-2.1);

\draw[lightgray] (6.4,-3.9)--(7,-3.4);
\draw[lightgray] (4.9,-3.4)--(6.3,-2.1);

\end{tikzpicture}

\caption{An illustration of the matchings $M'_i$ defined in the proof of Lemma \ref{lemreduc} for $n=7$ and $d=(4,6,9,9)$ depicted by their bipartite adjacency matrices.}
\end{figure}\label{fig:lemreduc}

Let $\Delta'(X)$ be the maximum degree in $H'$  of the vertices in $X$. If  $\Delta'(X)>n-k$ then there is a matching $M_{\Delta'(X)}'$ in $H'$ from the vertices of maximum degree in $X$ to $Z$. Color the edges of this matching with $\Delta'(X)$.
By removing this matching from $H'$ we obtain a bipartite multigraph in which the maximum degree of vertices in $X$ is $ \Delta'(X)-1$. By iterating this process we eventually reach a bipartite multigraph $H''$ in which every vertex in $X$ has degree $n-k$  while the maximum degree of the vertices in $Z$ still satisfies $\Delta''(Z)\le n-k$. By K\"onig's theorem, the edge--chromatic number of $H''$ is $n-k$. By combining an  edge--coloring of $H''$ with $n-k$ colors with the  ones obtained in the process of reducing the maximum degree of $H_1$,  the multigraph  $H'$ can be properly edge--colored in such a way that vertex $x_i$ is incident in $H'$ with colors $\{1,\ldots ,d'_i\}$, $1\le i\le k$. We finally obtain a sequential coloring of $H$ by adding to $H'$  the matchings $M_1,\ldots ,M_k$, where $M_j$ gets color $\Delta'(X)+(k-j)$: the largest color of an edge incident to $x_i$ is $d_{H'}(x_i)+t(i)=d_i$. By Proposition \ref{prop:goodascending}, $G$ has a starforest-ASD.
\end{proof}

The next result is a reformulation of Lemma \ref{lem:starforest} which shows that, if a reduced bipartite graph can be decomposed into $n$ star forests with sizes $1,2,\ldots ,n$, regardless of the fact that it is ascending, then the degree sequence of the graph satisfies a condition which is necessary for the existence of a star--forest--ASD.

\begin{lemma} \label{lem-nec}
Let $G=(X\cup Y, E)$ be a reduced bipartite graph with degree sequence $(d_1\le \cdots \le d_k)$.
If $G$ admits an edge--decomposition  into stars forests with centres in $X$,
$$
G=F_1\oplus F_{2}\oplus\cdots \oplus F_n,
$$
where $F_i$ has $i$ edges, then
\begin{equation}\label{eq:good}
\sum_{i=0}^{t-1}d_{k-i}\ge \sum_{i=0}^{t-1}(n-i)\; \mbox{ for each }\; t=1,\ldots, k.
\end{equation}
\end{lemma}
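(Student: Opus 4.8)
The plan is to reduce the statement to a double count between the forests of the decomposition and the vertices of $Y$, using the fact that in a \emph{reduced} graph the $Y$-degrees form precisely the conjugate partition of the $X$-degree sequence. Two elementary observations come first. Since $G$ is simple and each $F_j$ is a star forest with centres in $X$, every $y\in Y$ has degree at most one in each $F_j$; hence the $j$ edges of $F_j$ meet $j$ \emph{distinct} vertices of $Y$, and dually the edges at a fixed $y_\ell$ are spread one-per-forest, so the number of forests $F_j$ incident with $y_\ell$ equals $\deg_G(y_\ell)$. Writing $c_\ell=\deg_G(y_\ell)=|\{i:d_i\ge \ell\}|$ for $\ell=1,\dots,d_k$, the sequence $c_1\ge c_2\ge\cdots\ge c_{d_k}$ is exactly the conjugate partition of $(d_1\le\cdots\le d_k)$. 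Consequently one has, for every $t$, the partition identity $\sum_{\ell=1}^{d_k}\min(c_\ell,t)=\sum_{i=0}^{t-1}d_{k-i}$, the right-hand side being the sum of the $t$ largest $X$-degrees. I would also note here that $k=c_1\le n$, since $y_1$ is used by at most $n$ forests; thus every $t\in\{1,\dots,k\}$ satisfies $t\le n$.

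The core of the argument is then a single incidence count applied to the $t$ largest forests $F_n,F_{n-1},\dots,F_{n-t+1}$. Counting incidences between these forests and $Y$ \emph{by forests}, the forest $F_{n-i}$ meets $n-i$ vertices of $Y$, so the total is $\sum_{i=0}^{t-1}(n-i)$. Counting the same incidences \emph{by leaves}, a fixed $y_\ell$ lies in at most $t$ of these forests (there are only $t$ of them) and in at most $c_\ell$ forests altogether, hence in at most $\min(c_\ell,t)$ of them; summing over $\ell$ bounds the total above by $\sum_{\ell}\min(c_\ell,t)$. Combining the two counts and invoking the conjugate identity yields $\sum_{i=0}^{t-1}(n-i)\le\sum_{\ell}\min(c_\ell,t)=\sum_{i=0}^{t-1}d_{k-i}$, which is exactly \eqref{eq:good}. (This is, in essence, the easy necessity direction of the Gale--Ryser theorem applied to the bipartite forest--leaf incidence graph, whose degree sequences are $(1,2,\dots,n)$ on the forest side and $(c_1,\dots,c_{d_k})$ on the leaf side; I would give the counting directly rather than cite it.)

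The step I expect to require the most care is choosing the \emph{right} quantity to count. The naive route—estimating, forest by forest, the number of edges incident to the bottom $k-t$ vertices of $X$—is too weak, because a single low-degree centre may carry arbitrarily many edges inside one forest, so per-forest bounds do not sum to anything sharp. The essential idea is instead to count globally, over the $t$ \emph{largest} forests only, and to read the leaf-side bound $\sum_\ell\min(c_\ell,t)$ through the conjugate partition, which is what converts it into the targeted $X$-degree sum $\sum_{i=0}^{t-1}d_{k-i}$. Once the conjugate identity and the observation $k\le n$ are in place, no further calculation is needed and the inequality for all $t=1,\dots,k$ follows at once.
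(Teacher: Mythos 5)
Your proof is correct and follows essentially the same route as the paper's: both count incidences (edges) between the $t$ largest forests $F_n,\dots,F_{n-t+1}$ and the vertices of $Y$, bounding the contribution of each $y_\ell$ by $\min(\deg_G(y_\ell),t)$ and using the reduced structure of $G$ to identify $\sum_\ell\min(\deg_G(y_\ell),t)$ with $\sum_{i=0}^{t-1}d_{k-i}$. The only differences are cosmetic: the paper dresses the count as an induction on $t$ (whose hypothesis it never actually uses) and evaluates the $\min$-sum by an explicit telescoping computation, whereas you argue directly via the conjugate-partition identity and add the harmless observation $k\le n$.
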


\begin{proof} Let $X=\{x_1,\ldots ,x_k\}$ and $Y=\{y_1,\ldots ,y_{d_k}\}$ be the bipartition of $G$ where $x_i$ is adjacent to $y_1,\ldots ,y_{d_i}$ for each $i$. Since the graph is reduced, the last $d_k-d_{k-1}$ vertices of $Y$ have degree one, the preceding $d_{k-1}-d_{k-2}$ have degree $2$ and, in general,  the consecutive $d_{j}-d_{j-1}$ vertices $y_{d_{j-1}+1},\ldots  ,y_{d_j}$ have degree $j-1$.

 Since $F_n$ has $n$ leaves in $Y$ we clearly have $|Y|=d_k\ge n$. Thus \eqref{eq:good} is satisfied for $t=1$.

Assume that \eqref{eq:good} is satisfied for some $t=j-1<k$.  Consider the subgraph $G_j$ of $G$ induced by $F_n\oplus F_{n-1}\oplus\cdots \oplus F_{n-(j-1)}$.  Since each  vertex in $Y$ has degree at most one in each forest, it has degree at most $j$ in $G_j$. By combining this remark with  the former upper bound on the degrees of vertices in $Y$ we have
\begin{align*}
\sum_{i=0}^{j-1} (n-i)&=\sum_{i=1}^{d_k} d_{G_j}(y_i)\\
&\le \sum_{i=1}^{d_{j-1}} d_{G_j}(y_i)+\sum_{i=d_{j-1}+1}^{d_k} d_G(y_i)\\
&\le jd_{j-1}+(j-1)(d_{j}-d_{j-1})+\cdots +2(d_{k-1}-d_{k-2})+(d_k-d_{k-1})\\
&=\sum_{i=0}^{j-1}d_{k-i},
\end{align*}
and \eqref{eq:good} is satisfied for $t=j$. This concludes the proof.
\end{proof}


\section{An extension Lemma}\label{sec3}

 In this section we prove an extension Lemma which shows that, if $G_R$ admits a star--forest--ASD then so does $G$. This reduces the problem of giving sufficient conditions on the degree sequence of one stable set to ensure the existence of a star--forest--ASD to bipartite reduced graphs. For the proof of our extension Lemma we use the following result by H\"aggkvist \cite{hagg} on edge list--colorings of bipartite multigraphs.

\begin{theorem}\cite{hagg}\label{thm:hagg}
Let $H$ be a bipartite multigraph with stable sets $A$ and $B$. If $H$ admits a sequential coloring, then $H$ can be properly edge--colored for an arbitrary assignment of lists $\{L(a):\; a\in
A\}$ such that $|L(a)|=d(a)$ for each $a\in A$.
\end{theorem}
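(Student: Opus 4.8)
The final statement is Häggkvist's list--edge--colouring theorem, so the plan is to deduce it from the kernel method for list colouring (Bondy--Boppana--Siegel), exactly in the spirit of Galvin's proof of the Dinitz problem, using the sequential colouring to manufacture a kernel--perfect orientation of the line graph with the right out--degrees. First I would fix a sequential colouring $c$ of $H$, so that $c$ is proper and the edges at each $a\in A$ carry precisely the colours $1,\dots,d(a)$. Working in the line graph $L(H)$ (whose vertices are the edges of $H$, two of them adjacent when they share an endpoint), I orient it as follows: among edges meeting a common vertex $v$, set $e\to f$ exactly when $v$ \emph{prefers} $f$, where every $a\in A$ prefers the incident edge of \emph{larger} colour and every $b\in B$ prefers the incident edge of \emph{smaller} colour. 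To the line--graph vertex $e=ab$ I attach the list $L(e):=L(a)$, so that $|L(e)|=d(a)$; a proper $L$--colouring of $L(H)$ is exactly the edge--colouring we want.

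Next I would bound the out--degrees. For $e=ab$ the out--neighbours through $a$ are the edges at $a$ of colour larger than $c(e)$, of which there are exactly $d(a)-c(e)$, since the colours at $a$ are $1,\dots,d(a)$; the out--neighbours through $b$ are the edges at $b$ of colour smaller than $c(e)$, and as $c$ is proper these are distinct positive integers below $c(e)$, so there are at most $c(e)-1$ of them. The two sets are disjoint (a common element would be parallel to $e$, hence of a single colour, either above or below $c(e)$), whence
$$
d^{+}(e)\le \big(d(a)-c(e)\big)+\big(c(e)-1\big)=d(a)-1<|L(e)|.
$$
This is where the sequential hypothesis is used essentially: forcing the colours at $a$ to be exactly $\{1,\dots,d(a)\}$ is what makes the two contributions telescope to $d(a)-1$.

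It remains to verify that the orientation is kernel--perfect, i.e.\ every induced subdigraph has a kernel (an independent set $S$ such that every vertex outside $S$ has an out--neighbour in $S$). For a sub--multigraph $H'\subseteq H$, a kernel of the oriented $L(H')$ is a matching $N$ such that every edge $e=ab\notin N$ points into $N$, meaning $N$ contains an edge at $a$ that $a$ prefers to $e$, or an edge at $b$ that $b$ prefers to $e$. This is precisely the assertion that $N$ is a \emph{stable} matching for the preference system ``$a$ ranks by decreasing colour, $b$ by increasing colour'': the negation of the kernel condition is exactly that $e$ is a blocking edge. Since the colours are distinct at each vertex, the preferences are strict, so Gale--Shapley supplies a stable matching and hence a kernel. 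With kernel--perfectness and $|L(e)|>d^{+}(e)$ established, the kernel method finishes: repeatedly pick a colour $\gamma$, take a kernel of the subdigraph induced by the vertices whose list still contains $\gamma$, colour that kernel with $\gamma$, delete $\gamma$ from the lists of its out--neighbours, and iterate; each vertex loses at most $d^{+}(e)<|L(e)|$ colours, so a proper $L$--colouring of $L(H)$ results.

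The main obstacle is the kernel--perfectness step: one must set up the two opposite preference conventions on $A$ and $B$ correctly and handle parallel edges, where the two shared endpoints disagree and so generate opposite arcs. The saving observation is twofold: these opposite arcs never inflate a single out--degree past $d(a)-1$, because each parallel edge is counted exactly once, on the side where it is the preferred one; and the kernel/stable--matching correspondence is unaffected, since a matching uses at most one edge at each vertex. Everything else is the routine transcription between proper edge--colourings of $H$ and proper vertex--colourings of $L(H)$ from the assigned lists.
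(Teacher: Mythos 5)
Your proof is correct, but there is nothing in the paper to compare it against: Theorem~\ref{thm:hagg} is quoted from H\"aggkvist~\cite{hagg} and used as a black box in the proof of the Extension Lemma~\ref{lemexten}; the paper itself contains no proof of it. What you have written is a self-contained derivation by Galvin's kernel method, and the essential points all check out: the sequential colouring forces the colours at each $a\in A$ to be exactly $\{1,\dots,d(a)\}$, so with the two opposite preference orders the out-degree of an edge $e=ab$ in the oriented line graph is at most $(d(a)-c(e))+(c(e)-1)=d(a)-1<|L(a)|$; kernels of induced subdigraphs are precisely stable matchings of the corresponding sub-multigraphs, which exist by Gale--Shapley because the proper colouring makes all preferences strict; and the Bondy--Boppana--Siegel lemma then delivers the list colouring. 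Your treatment of parallel edges is also the right one: a parallel edge lands in exactly one of the two out-neighbour sets, and the resulting digons do not obstruct kernel-perfectness. The only slip is a wording issue in the final iteration: the colour $\gamma$ must be deleted from the lists of the uncoloured vertices that point \emph{into} the kernel (equivalently, from every uncoloured vertex still holding $\gamma$, since by the kernel property any such vertex has an out-neighbour in the kernel), not from the kernel's ``out-neighbours''; your accounting sentence (each vertex loses at most $d^{+}(e)<|L(e)|$ colours, each loss charged to a coloured out-neighbour) shows you intend exactly the right argument, and the lemma is standard in any case. Compared with the paper's bare citation, your route buys a self-contained proof of the theorem from standard tools, in exactly the multigraph generality the paper needs.
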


\begin{lemma}[Extension Lemma]\label{lemexten}
Let $G$ be a bipartite graph with bipartition
$A=\{a_1,\ldots ,a_k\}$ and $B$ and degree
sequence  $d=(d_1\ge \cdots \ge d_k)$, $d=d(a_i)$, of the vertices
in $A$. If the reduced graph $G_R$ of $G$ admits a decomposition
$$
G_R=F'_1\oplus\cdots \oplus F'_t,$$
where each $F'_i$ is a star
forest, then $G$ has an edge decomposition
$$G=F_1\oplus\cdots \oplus F_t$$
where $F_i\cong F'_i$ for each $i=1,\ldots ,t$.
\end{lemma}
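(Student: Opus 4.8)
The plan is to use H\"aggkvist's list edge--coloring theorem (Theorem \ref{thm:hagg}) as the engine, applied to the multigraph that encodes the given star--forest decomposition of $G_R$. First I would translate the decomposition $G_R=F'_1\oplus\cdots\oplus F'_t$ into a bipartite multigraph $H=H(A,Z)$ with $Z=\{z_1,\ldots,z_t\}$, exactly as in the proof of Proposition \ref{prop:goodascending}: put $d_{F'_j}(a_i)$ parallel edges between $a_i$ and $z_j$. The crucial point is that, reading the decomposition off through the proper coloring in which the edge $a_iy'_h\in E(F'_j)$ contributes a color--$h$ edge $a_iz_j$, the multigraph $H$ comes equipped with a \emph{sequential} coloring for $A$: the edges at $a_i$ use precisely the colors $\{1,\ldots,d_i\}$, because $G_R$ is reduced and $a_i$ is adjacent in $G_R$ to $y'_1,\ldots,y'_{d_i}$.

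Next I would set up the list assignment that records the actual neighborhoods in $G$. Since $G$ has the same $A$--degree sequence as $G_R$, each $a_i$ has $d_i$ neighbors in $B$; let $L(a_i)\seq B$ be exactly this neighbor set, so that $|L(a_i)|=d_i=d_H(a_i)$. By Theorem \ref{thm:hagg}, because $H$ admits a sequential coloring for $A$, it can be properly edge--colored respecting these lists: each edge $a_iz_j$ receives a color $c(a_iz_j)\in L(a_i)\seq B$, with the colors at $a_i$ all distinct and the colors at $z_j$ all distinct. I would then define $F_j$ to be the subgraph of $G$ containing, for each edge $a_iz_j$ of $H$, the edge $a_i\,c(a_iz_j)$ of $G$. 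Properness at $a_i$ guarantees these are genuine distinct edges of $G$ giving $d_{F_j}(a_i)=d_{F'_j}(a_i)$, and the fact that the lists range over $B$ together with properness at each $z_j$ guarantees every vertex of $B$ has degree at most one in each $F_j$, so each $F_j$ is a star forest centered in $A$.

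It then remains to check that this is a decomposition and that $F_j\cong F'_j$. Edge--disjointness across the $F_j$ follows from properness at the vertices $a_i$: two edges $a_iz_j$ and $a_iz_{j'}$ with $j\ne j'$ get different colors, hence map to different edges of $G$, and every edge of $G$ incident to $a_i$ is hit exactly once because $a_i$ sends $d_i$ edges into $H$ colored with the $d_i$ distinct elements of $L(a_i)=N_G(a_i)$. The isomorphism $F_j\cong F'_j$ is immediate from Remark \ref{rem:subforest}, since $d_{F_j}(a_i)=d_{F'_j}(a_i)$ for every $i$ (both equal the number of edges $a_iz_j$ in $H$), and two star forests with the same degree multiset on their centers are isomorphic.

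The main obstacle, and the reason H\"aggkvist's theorem is needed rather than a bare application of K\"onig's edge--coloring theorem, is that the lists $L(a_i)$ are prescribed sets of size exactly $d_i$ drawn from a common palette $B$ of size $|B|$ that may be much larger than the maximum degree; a plain proper edge--coloring need not respect which specific vertices of $B$ are available at a given $a_i$. The sequential hypothesis is exactly what upgrades ordinary colorability to list--colorability here, and verifying that the encoding $H$ inherits a sequential coloring from the reducedness of $G_R$ is the one step that must be done with care. Everything else is the same bookkeeping already carried out in Proposition \ref{prop:goodascending}, now run with the neighbor sets of $G$ as color lists instead of the canonical colors $\{1,\ldots,d_k\}$.
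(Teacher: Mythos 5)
Your proposal is correct and is essentially identical to the paper's own proof: both encode the decomposition of $G_R$ as a bipartite multigraph on $A$ and $\{z_1,\ldots,z_t\}$, observe that reducedness makes the induced edge--coloring sequential, apply H\"aggkvist's theorem with the lists $L(a_i)=N_G(a_i)$, and read the list--coloring back as the decomposition $G=F_1\oplus\cdots\oplus F_t$ with $F_j\cong F'_j$. Your verification of edge--disjointness and of the isomorphisms via equal center--degree multisets matches the paper's argument, with slightly more detail spelled out.
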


\begin{proof}
Let $C$ be the $(k\times t)$ matrix whose entry $c_{ij}$
is the number of edges incident to $a_i$ in the star forest $F'_j$
of the edge decomposition of $G_R$.

Consider the bipartite multigraph $H$ with stable sets $A$ and  $U=\{u_1,\ldots ,u_t\}$,
where $a_i$ is joined with $u_j$ with $c_{ij}$ parallel edges. Now,
for each pair $(i,j)$,  color the $c_{ij}$ parallel edges of $H$
with the neighbors of $a_i$ in the forest $F'_j$  bijectively. Note
that in this way we get a proper edge--coloring of $H$: two edges
incident with a vertex $a_i$ receive different colors since the
 bipartite graph $G_R$ has no multiple edges, and two edges
incident to a vertex $u_j$ receive different colors since $F'_j$ is
a star forest.

By the definition of the bipartite graph $G_R$, each vertex $a_i\in A$ is incident
in the bipartite multigraph $H$ with edges colored  $1,2,\ldots, d_i$. Let $L(a_i)$ be the
list of neighbours of $a_i$ in the original bipartite graph $G$. By Theorem
\ref{thm:hagg}, there is a proper edge--coloring $\chi'$ of $H$ in which the
edges incident to  vertex $a_i$ in $A$ receive the colors from the
list $L(a_i)$ for each $i=1,\ldots ,k$. Now
construct $F_s$ by letting the edge $a_ib_j$ be in $F_s$ whenever
the edge $a_iu_s$ is colored $b_j$ in the latter edge--coloring of
$H$. Thus $F_s$ has the same number of edges than $F'_s$ and the
degree of $a_i$ in $F_s$ is $c_{is}$, the same as in $F_s'$.
Moreover, since the coloring is proper, $F'_s$ is a star forest and two forests $F'_s, F'_{s'}$ are edge--disjoint whenever $s\neq s'$.
This concludes the proof.\end{proof}

We are now ready to prove our main result.

{\it Proof of Theorem \ref{thm:starforest}}: Let $G$ be a bipartite graph with degree sequence $d=(d_1\ge \cdots \ge d_k)$ satisfying $\sum_id_i={n+1\choose 2}$ and $d_i\ge n-i+1$, $i=1,\ldots ,k$. By Lemma \ref{lemasc} there is a $d$--ascending matrix $C$ such that $c_{ij}\ge 1$ for each $(i,j)$ with $i+j\le k+1$. By Lemma \ref{lemreduc} there is a star--forest--ASD of the reduced graph $G_R$ with degree sequence $d$. By Lemma \ref{lemexten} there is also a star--forest--ASD for $G$.\qed





\begin{thebibliography}{9}
\bibitem{alavi} Y. Alavi, A. J.Boals, G. Chartrand, P. Erd\"os, O.
Oellerman, The ascending subgraph decomposition problem, {\em
Congressus Numeratium} {\bf 58} (1987), 7--14.



\bibitem{ando}  K. Ando, S. Gervacio, M. Kano, Disjoint integer
subsets having a constant sum, {\it Discrete Mathematics} {\bf 82}
(1990), 7--11.

\bibitem{chenma} Chen Huaitang and Ma Kejie, On the ascending subgraph
decompositions of regular graphs, {\it Applied Mathematics - A
Journal of Chinese Universities} Volume 13, Number 2 (1998) 165-170


\bibitem{chen} F. L. Chen, H. L. Fu, Y. Wang and J. Zhou.
\newblock Partition of a set of integers into subsets with prescribed sums.
\newblock {\em Taiwanese Journal of Mathematics}, {\bf 9} (4) (2005), 629--638.

\bibitem{faudree} R. J. Faudree, A. Gy\'arf\'as, R. H. Schelp, Graphs
which have an ascending subgraph decomposition, {\em Congressus
Numeratium} {\bf 59} (1987), 49--54.

\bibitem{fu} H.L.Fu,A note on the ascending subgraph decomposition problem, Discrete Math. {\bf 84} (1990) 315--318.

\bibitem{fuhu1}  H. L. Fu, W. H. Hu, A special partition of the set
$I_n$, {\em Bulletin of ICA} {\bf 6} (1992), 57--61.

\bibitem{fuhu2}  H. L. Fu,
W. H. Hu, A Note on ascending subgraph decompositions of complete
multipartite graphs {\it Discrete Mathematics} {\bf 226}   (2001)
397--402.

\bibitem{fuhu3}  H. L. Fu,
W. H. Hu, Ascending subgraph decomposition of regular graphs {\it
Discrete Mathematics} {\bf 253}   (2002) 11 -- 18.

\bibitem{hagg} R. H\"aggkvist,  Restricted edge-colourings of bipartite graphs.   {\it Combin. Probab. Comput.}  5  (1996),  no. 4, 385--392.

\bibitem{hamburgerweiting} P. Hamburger, W. Cao, Edge Disjoint Paths of Increasing Order in
Complete Bipartite Graphs,  {\it Electronic Notes in Discrete Mathematics}
22 (2005) 61--67.


\bibitem{lm}Llad\'o, A., Moragas, J, On the modular sumset partition problem,
{\it European J. Combin.} 33 (2012), no. 4, 427--434.

\bibitem{ma} K. Ma, H. Zhou, J. Zhou, On the ascending star subgraph
decomposition of star forests, {\em Combinatorica} {\bf 14} (1994),
307--320.

\end{thebibliography}
\end{document}